\def\vandaag{\number\day\space\ifcase\month\or
 januari\or februari\or  maart\or  april\or mei\or juni\or  juli\or
 augustus\or  september\or  oktober\or november\or  december\or\fi,
\number\year}
\def\today{\ifcase\month\or
 Jan\or Febr\or  Mar\or  Apr\or May\or Jun\or  Jul\or
 Aug\or  Sep\or  Oct\or Nov\or  Dec\or\fi
 \space\number\day, \number\year}
\begin{document}

\newtheorem{theorem}{Theorem}[section]
\newtheorem{lemma}[theorem]{Lemma}
\newtheorem{proposition}[theorem]{Proposition}
\newtheorem{corollary}[theorem]{Corollary}
\newtheorem{conjecture}[theorem]{Conjecture}
\newtheorem{definition-lemma}[theorem]{Definition-Lemma}
\newtheorem{claim}[theorem]{Claim}
\theoremstyle{definition}
\newtheorem{definition}[theorem]{\bf Definition}
\newtheorem{example}[theorem]{\bf Example}
\newtheorem{none}[theorem]{}
\theoremstyle{remark}
\newtheorem{remark}[theorem]{\bf Remark}
\newtheorem{conclusion}[theorem]{\bf Conclusion}
\newtheorem{remarks}[theorem]{\bf Remarks}
\newtheorem{question}[theorem]{\bf Question}
\newtheorem{problem}[theorem]{\bf Problem}
\newcommand{\CC}{\mathbb C}
\newcommand{\DD}{\mathbb D}
\newcommand{\EE}{\mathbb E}
\newcommand{\FF}{\mathbb F}
\newcommand{\GG}{\mathbb G}
\newcommand{\HH}{\mathbb H}
\newcommand{\LL}{\mathbb L}
\newcommand{\PP}{\mathbb P}
\newcommand{\QQ}{\mathbb Q}
\newcommand{\RR}{\mathbb R}
\newcommand{\VV}{\mathbb V}
\newcommand{\WW}{\mathbb W}
\newcommand{\ZZ}{\mathbb Z}
\newcommand{\A}{\mathcal A}
\newcommand{\R}{\mathcal R}
\newcommand{\cO}{\mathcal O}
\newcommand{\cX}{\mathcal X}

\newcommand{\cI}{\mathcal I}
\newcommand{\sA}{{\mathcal A}^{\ast}}
\newcommand{\bM}{\overline{\mathcal M}}
\newcommand{\bA}{\overline{\mathcal A}}
\newcommand{\sF}{\mathscr {F}}
\newcommand{\cH}{\mathcal {H}}
\newcommand{\sH}{\mathscr {H}}
\newcommand{\sL}{\mathscr {L}}
\newcommand{\sZ}{\mathscr {Z}}

\newcommand\Sp{\operatorname{Sp}}
\newcommand\ch{\operatorname{ch}}
\newcommand\Fr{\operatorname{Fr}}
\newcommand\spec{\operatorname{Spec}}
\newcommand\an{\mathrm{an}}
\newcommand\gp{\mathrm{gp}}
\title[]{Modular forms of degree two \\
and curves of genus two\\ 
in characteristic two}

\author{Fabien Cl\'ery}
\address{Department of Mathematics,
Loughborough University,
England, and Institute of Computational and Experimental Research in Mathematics, 
121 South Main Street, Providence, RI 02903, USA
}
\email{cleryfabien@gmail.com}

\author{Gerard van der Geer}
\address{Korteweg-de Vries Instituut, Universiteit van Amsterdam, Science Park 904, 1098 XH Amsterdam, The Netherlands, and 
Universit\'e du Luxembourg, Unit\'e de Recherche en Math\'ematiques,
L-4364 Esch-sur-Alzette, Luxembourg.}
\email{g.b.m.vandergeer@uva.nl}

\subjclass{14F46,11F70,14J15}

\maketitle
\begin{abstract}
We describe the ring of modular forms of degree $2$ in characteristic
$2$ using its relation with curves of genus $2$.
\end{abstract}


\begin{section}{Introduction}
In the 1960s Igusa determined the ring of Siegel modular forms of degree~$2$
using the close connection between
the moduli of principally polarized abelian surfaces and the moduli of curves of 
genus $2$, see \cite{Igusa1960}. 
Since curves of genus $2$ in characteristic different from $2$ can be described by
binary sextics he could use the invariant theory of binary sextics and established
the link with modular forms using Thomae's formulas that express theta constants
in terms of cross ratios of zeros of binary sextics. Recently, in joint work with Carel Faber we 
exhibited in \cite{CFG-MathAnn}
a more direct version of this
without the detour of Thomae's formulas  and used it for describing vector-valued
Siegel modular forms of degree $2$. 

In positive characteristic modular forms of degree $2$ are described as sections of
powers of the determinant bundle $L$ of the Hodge bundle ${\EE}$ on the moduli space
$\mathcal{A}_2 \otimes {\FF}_p$ of principally polarized abelian surfaces in
characteristic $p$. It turns out that for $p\geq 5$ the situation is very close
to the case of characteristic $0$. For $p\geq 5$ the graded ring 
$$
\mathcal{R}_2({\FF}_p)=\oplus_k H^0(\mathcal{A}_2 \otimes {\FF}_p, L^k)
$$
has generators of weight $4,6,10,12$ and $35$ and there is a relation of degree
$70$ expressing the square of the odd weight generator in terms of the even weight
generators, just as in characteristic $0$. In fact, Igusa determined the ring 
$\mathcal{R}_2({\ZZ})$ of integral modular forms and the reduction map 
$\mathcal{R}_2({\ZZ})\to \mathcal{R}_2({\FF}_p)$ is surjective for $p\geq 5$. 
We refer to
\cite{Igusa1979, Ichikawa, B-N,N,N2}.

The situation in characteristic $2$ and $3$ is different. The ring 
$\mathcal{R}_2({\FF}_3)$ was determined in \cite{vdG-char3} using the relation with 
binary sextics. 

In characteristic $2$ curves of genus $2$ can no longer be described by binary sextics.
Nevertheless, as Igusa showed in \cite{Igusa1960} there is still a close relationship
with the invariant theory of binary sextics. In this paper we use invariants of
curves of genus $2$ in characteristic $2$ to determine the ring of modular forms
of degree $2$. 
The result is as follows.

\begin{theorem}
The ring $\mathcal{R}_2({\FF}_2)$ is generated by modular forms of weights 
$1$, $10$, $12$, $13$, $48$
with one relation of weight $52$:
$$
\mathcal{R}_2({\FF}_2) = {\FF}_2[\psi_1,\chi_{10},\psi_{12},\chi_{13}, \chi_{48}]/(R)
$$
with $R=\chi_{13}^4 + \psi_1^3\chi_{10}\chi_{13}^3+
\psi_1^4 \chi_{48} +
\chi_{10}^4 \psi_{12} $.
The ideal of cusp forms is generated by $\chi_{10},\chi_{13}$ and $\chi_{48}$.
\end{theorem}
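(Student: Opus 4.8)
The plan is to exhibit the five forms explicitly, verify the single relation by a finite computation, and then deduce the ring structure from a generation statement together with a Krull-dimension count.

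First I would pin down the generators and their geometric meaning. The weight-$1$ form $\psi_1$ should be the Hasse invariant of $\mathcal{A}_2\otimes\FF_2$: in characteristic $p$ the Hasse invariant is a modular form of weight $p-1$, so for $p=2$ it has weight $1$ and is nonzero precisely on the ordinary locus. The remaining four I would construct through the link with genus-two curves: following \cite{CFG-MathAnn}, a scalar modular form of weight $k$ corresponds, via the Torelli morphism, to an invariant of the genus-two curve of a fixed degree, and conversely the invariants of curves in characteristic $2$ studied by Igusa (\cite{Igusa1960}) produce modular forms. I would take $\chi_{10}$ to be the weight-$10$ cusp form analogous to Igusa's (vanishing on the product locus $\mathcal{A}_{1,1}$), $\psi_{12}$ a non-cusp form of weight $12$, and $\chi_{13},\chi_{48}$ the new cusp forms of weights $13$ and $48$ arising from characteristic-$2$ invariants; along the way I record that $\chi_{10},\chi_{13},\chi_{48}$ are cusp forms while $\psi_1,\psi_{12}$ are not.

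Next, with explicit descriptions (as $q$-expansions, or as invariants evaluated on the universal curve), I would check that the weight-$52$ expression
$$R=\chi_{13}^4+\psi_1^3\chi_{10}\chi_{13}^3+\psi_1^4\chi_{48}+\chi_{10}^4\psi_{12}$$
vanishes identically; this is one homogeneous identity, verifiable after expanding to sufficiently high order. It yields a ring homomorphism $\bar\phi\colon S\to\mathcal{R}_2(\FF_2)$ with $S:=\FF_2[\psi_1,\chi_{10},\psi_{12},\chi_{13},\chi_{48}]/(R)$. The core of the argument is then to prove that the five forms generate, i.e.\ that $\bar\phi$ is surjective, which I would establish through the invariant-theoretic dictionary: identify $\mathcal{R}_2(\FF_2)$ with the ring of invariants of genus-two curves in characteristic $2$ and show this ring is generated by the invariants matching the five forms. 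Once surjectivity is in hand, the presentation follows formally. The polynomial $R$ is irreducible — it is linear and primitive in $\chi_{48}$, since $\psi_1\nmid(\chi_{13}^4+\psi_1^3\chi_{10}\chi_{13}^3+\chi_{10}^4\psi_{12})$ — so $S$ is an integral domain of Krull dimension $4$. On the other side $\mathcal{R}_2(\FF_2)$ is the homogeneous coordinate ring of the $3$-dimensional Satake compactification for the ample bundle $L$, hence an integral domain of Krull dimension $\dim\mathcal{A}_2+1=4$. A surjection of domains of equal dimension $4$ is forced to be injective, since a nonzero kernel is a prime of height $\geq 1$ and would drop the dimension to $\leq 3$; thus $\bar\phi$ is an isomorphism and $(R)$ is the full ideal of relations.

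Finally I would identify the cusp ideal using the Siegel operator $\Phi\colon\mathcal{R}_2(\FF_2)\to\mathcal{R}_1(\FF_2)$. Here $\mathcal{R}_1(\FF_2)=\FF_2[A,\Delta]$, with $A$ the weight-$1$ Hasse invariant and $\Delta$ the weight-$12$ discriminant; one has $\Phi(\chi_{10})=\Phi(\chi_{13})=\Phi(\chi_{48})=0$, while $\Phi(\psi_1)=A$ and $\Phi(\psi_{12})$ equals $\Delta$ up to adding $A^{12}$, so $\FF_2[\psi_1,\psi_{12}]$ maps isomorphically onto $\mathcal{R}_1(\FF_2)$. Since setting $\chi_{10}=\chi_{13}=\chi_{48}=0$ kills $R$, one gets $\mathcal{R}_2(\FF_2)/(\chi_{10},\chi_{13},\chi_{48})\cong\FF_2[\psi_1,\psi_{12}]$, and the injectivity of $\Phi$ on this quotient shows the kernel of $\Phi$ — the ideal of cusp forms — is exactly $(\chi_{10},\chi_{13},\chi_{48})$. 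I expect the real difficulty to lie entirely in the generation step: unlike the case $p\geq 5$, the ring is not the reduction of Igusa's integral ring, the characteristic-$2$ invariant theory rests on Artin--Schreier rather than sextic models, and one must control the extension of sections across $\mathcal{A}_{1,1}$ and over the non-ordinary strata. Producing a provably complete list of generators of the invariant ring — equivalently, an independent computation of all the dimensions $h^0(\mathcal{A}_2\otimes\FF_2,L^k)$ to match the Hilbert series $(1-t^{52})/\prod_i(1-t^{w_i})$ of $S$ — is the crux.
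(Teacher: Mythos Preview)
Your Krull-dimension argument for injectivity and your treatment of the cusp ideal are fine, but the heart of the matter --- surjectivity of $\bar\phi$ --- rests on a false identification. You propose to ``identify $\mathcal{R}_2(\FF_2)$ with the ring of invariants of genus-two curves in characteristic $2$''. This fails: the Torelli image $\mathcal{M}_2\hookrightarrow\mathcal{A}_2$ is only the open complement of the product locus $\mathcal{A}_{1,1}$, so the invariant ring $\mathcal{K}$ corresponds to sections over $\mathcal{M}_2$, i.e.\ to the localization $\mathcal{R}_2(\FF_2)_{\chi_{10}}$, not to $\mathcal{R}_2(\FF_2)$ itself. Concretely, the degree-$3$ invariant $K_3$ (coming from $J_2^2-24J_4$) lies in $\mathcal{K}$, yet the associated rational form $\nu(K_3)$ has a genuine pole of order $2$ along $\mathcal{A}_{1,1}$; indeed $\chi_{13}=\chi_{10}\,\nu(K_3)$ is how the weight-$13$ generator arises. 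So $\mathcal{R}_2(\FF_2)\subsetneq\mathcal{K}$, and knowing generators of $\mathcal{K}$ (which the paper does not even attempt to determine) would still leave you the problem of characterizing which invariants extend across $\mathcal{A}_{1,1}$.

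The paper circumvents this entirely by a direct Hilbert-function match. Restriction to $\mathcal{A}_{1,1}$ gives an exact sequence
\[
0\to M_{k-10}(\Gamma_2)\to M_k(\Gamma_2)\to \mathrm{Sym}^2 M_k(\Gamma_1),
\]
which, together with $\dim M_k(\Gamma_1)=\lfloor k/12\rfloor+1$, yields the upper bound $d(k)\le d(k-10)+c(k)(c(k)+1)/2$. One checks $d(k)=r(k)$ for $k\le 13$ and then runs an induction: the bound already forces $d(k)=r(k)$ for $k\not\equiv 0,1,2\pmod{12}$, and the residual cases are handled by the lemma $d(k+1)-d(k)\ge r(k+1)-r(k)$. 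That lemma is proved by showing that the span $N_{k+1}$ of monomials $\chi_{10}^a\psi_{12}^b\chi_{13}^c\chi_{48}^d$ with $c\le 3$ meets $\psi_1 M_k(\Gamma_2)$ trivially, which in turn reduces (via $\mu$) to the algebraic independence of $K_1,K_3,K_8,K_{10}$ in $\mathcal{K}$. This interplay --- upper bounds from the product locus, lower bounds from the invariant side --- is exactly the mechanism you are missing; your closing paragraph correctly flags the dimension computation as the crux, but offers no method for it.
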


The modular form $\psi_1$ is the Hasse invariant vanishing on the locus
of non-ordinary abelian surfaces; the form $\chi_{10}$ 
vanishes doubly on the
locus of abelian surfaces that are products of elliptic curves. All the forms
$\psi_1, \chi_{10}, 
\psi_{12}, \chi_{13}$ and $\chi_{48}$ are constructed using invariant
theory, see Section~\ref{InvariantsandMF}.
 
We point out that we find non-zero 
(regular) vector-valued modular forms
of weights not allowed in characteristic $0$, like $(3,-1)$ or $(2,0)$.

We thank the referees for helpful remarks.

\end{section}
\begin{section}{Modular Forms}
We denote by $\mathcal{A}_g$ the moduli stack of principally polarized abelian varieties
of dimension $g$ and by ${\EE}_g$ the Hodge bundle on $\mathcal{A}_g$. It extends to
a Faltings-Chai type toroidal compactification $\tilde{\mathcal{A}}_g$. 
We write 
$L=\det({\EE}_g)$. For $g>1$ sections of $L^k$ over $\mathcal{A}_g$ 
extend automatically to $\tilde{\mathcal{A}}_g$, a property known as 
the Koecher principle, see \cite[Prop. 1.5 page 140]{F-C}.
We write $M_k(\Gamma_g)=H^0(\tilde{\mathcal{A}}_g\otimes {\FF}_2, L^k)$ for the space of
scalar-valued modular forms of weight $k$. Moreover, we set
$$
\mathcal{R}_g({\FF}_2)= \oplus_k M_k(\Gamma_g)\, ,
$$ 
the ring of scalar-valued modular forms of degree $g$ in characteristic $2$.
It is a finitely generated ${\FF}_2$-algebra. 
It is well-known that $H^0(\tilde{\mathcal{A}}_g, L^k)=(0)$ for $k<0$.
For $g=1$ we know by Deligne \cite{Deligne1975} that
$\mathcal{R}_1({\FF}_2)={\FF}_2[c_1,\Delta]$ with $c_1$ of weight $1$ and 
$\Delta$ a cusp form of weight $12$.

We are interested in the case $g=2$. In the following we shall simply write
${\EE}$ for the Hodge bundle ${\EE}_2$.

In the Chow group with rational coefficients 
of codimension~$1$ cycles of $\tilde{\A}_2$  the class of the locus
$\mathcal{A}_{1,1}$ of products of elliptic curves satisfies the relation
$$
2[\overline{\A}_{1,1}]+[D]= 10\, \lambda_1 \, ,
\eqno(1)
$$
where $\lambda_1$ is the first Chern class of ${\EE}$ and $D$ 
the divisor added to compactify $\mathcal{A}_2$, see \cite[(8.4)]{Mumford}. Therefore 
there exists a non-zero modular form $\chi_{10}$ of weight $10$ over ${\FF}_2$ 
vanishing doubly on the divisor  $\overline{\A}_{1,1} \otimes {\FF}_2$.
As the cycle relation shows $\chi_{10}$ is a cusp form. The form is determined up
to a non-zero multiplicative constant and we shall normalize $\chi_{10}$ later.

Since $\mathcal{A}_{1,1}$ is the image of a degree $2$ morphism
$\mathcal{A}_1 \times \mathcal{A}_1 \to \mathcal{A}_{1,1}$ 
by restriction to the locus $\mathcal{A}_{1,1}\otimes{\FF}_2$ 
we find 
an exact sequence
$$
0 \to M_{k-10}(\Gamma_2) \to M_k(\Gamma_2) \to {\rm Sym}^2(M_k(\Gamma_1))\, .
\eqno(2)
$$

Another divisor that we will use is the locus $V_1$ 
of principally polarized abelian surfaces
of $2$-rank $\leq 1$. The cycle class of this locus is known; it is the vanishing locus
of the map $L\to L^{\otimes 2}$ induced by Verschiebung on the universal abelian surface
and we thus find (cf.\ \cite{vdG})
$$
[V_1] = \lambda_1 \, .
$$
Hence there exists a modular form $\psi_1$ of weight $1$, determined up to
a non-zero multiplicative constant,
with divisor $V_1$. It is called the Hasse invariant. This is not a cusp form
since it restricts to the Hasse invariant $c_1$ on the boundary component 
$\mathcal{A}_1^{\ast}\otimes{\FF}_2$ in the Satake
compactification $\mathcal{A}_2^{\ast} \otimes {\FF}_2$.  
We shall normalize $\psi_1$ later.

The existence of $\psi_1$ 
implies $\dim M_k(\Gamma_2) \leq \dim M_{k+1}(\Gamma_2)$. 
The exact sequence (2) and the structure of $\mathcal{R}_1({\FF}_2)$ imply that
$$
\dim M_k(\Gamma_2)=1 \quad \text{\rm for $k=1,\ldots,9$ \quad and $\dim M_{10}(\Gamma_2)=2$. } \eqno(3)
$$
In order to estimate $\dim M_k(\Gamma_2)$ for larger $k$  
we can apply semi-continuity (cf.\ \cite[Thm.\ 12.8]{Hartshorne} )
to $L$ defined over $\tilde{\mathcal{A}}_2 / {\ZZ}$
and deduce $\dim H^0(\tilde{\mathcal{A}}_2 \otimes {\CC},L^k) \leq 
\dim H^0(\tilde{\mathcal{A}}_2 \otimes {\FF}_2,L^k)$.
Using this  and the exact sequence (2) 
we can deduce
$$
\dim M_{11}(\Gamma_2)=2, \quad \text{\rm and $3\leq \dim M_{k}(\Gamma_2) \leq 4$ for $ 12\leq k \leq 19$, }  \eqno(4)
$$
but we will deduce this in another way later, see (7).

Similarly, we can consider vector-valued modular forms. 
For each irreducible representation
$\rho$ of ${\rm GL}(2,{\QQ})$ 
there is a corresponding bundle ${\EE}_{\rho}$. Since we are dealing
with $g=2$ the $\rho$ in question correspond to pairs $(j,k)$ of integers with $j\geq 0$
and ${\EE}_{\rho}={\rm Sym}^j({\EE}) \otimes \det({\EE})^{\otimes k}$.
We write $M_{j,k}(\Gamma_2)$ for $H^0(\mathcal{A}_{2}\otimes {\FF}_2,
{\rm Sym}^j({\EE})\otimes \det({\EE})^{\otimes k})$ and $S_{j,k}(\Gamma_2)$ for the space
of cusp forms. Again, these sections extend over $\tilde{\mathcal{A}}_2 \otimes {\FF}_2$ by
the Koecher principle. But note that $\rho \otimes {\FF}_2$ might be reducible.

\begin{lemma} \label{dim68}
We have $\dim S_{6,8}(\Gamma_2)\geq 1$ and every element of $S_{6,8}(\Gamma_2)$ vanishes on $\mathcal{A}_{1,1}\otimes {\FF}_2$. 
If $\dim S_{6,8}(\Gamma_2)\geq 2$ then $M_{6,-2}(\Gamma_2)\neq 0$.
\end{lemma}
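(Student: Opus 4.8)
The plan is to prove the three assertions in turn: the lower bound by a semicontinuity comparison with characteristic $0$, and the other two by restricting to $\mathcal{A}_{1,1}$ and decomposing the restricted bundle. For $\dim S_{6,8}(\Gamma_2)\geq 1$ I would note that ${\rm Sym}^6(\EE)\otimes L^8$ and the boundary divisor $D$ are defined over $\ZZ$, so the semicontinuity argument already used for the powers $L^k$ applies verbatim to the coherent sheaf ${\rm Sym}^6(\EE)\otimes L^8(-D)$ on $\tilde{\mathcal{A}}_2/\ZZ$. This gives $\dim S_{6,8}(\Gamma_2)\geq \dim H^0(\tilde{\mathcal{A}}_2\otimes\CC,{\rm Sym}^6(\EE)\otimes L^8(-D))$, and the right-hand side equals $1$ in characteristic $0$ (the cusp form $\chi_{6,8}$ of \cite{CFG-MathAnn}).

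For the vanishing on $\mathcal{A}_{1,1}$, restrict to $\mathcal{A}_{1,1}=(\mathcal{A}_1\times\mathcal{A}_1)/(\ZZ/2\ZZ)$, where $\EE=L_1\oplus L_2$ with $L_i$ the Hodge bundles of the factors and $L=L_1\otimes L_2$. Then
$$
{\rm Sym}^6(\EE)\otimes L^8|_{\mathcal{A}_{1,1}}=\bigoplus_{a=0}^{6}L_1^{a+8}\otimes L_2^{14-a},
$$
so the restriction of a form is a tuple $(s_a)$ with $s_a\in M_{a+8}(\Gamma_1)\otimes M_{14-a}(\Gamma_1)$. A cusp form vanishes on $D$ and hence on the boundary of $\overline{\mathcal{A}}_{1,1}$, forcing each $s_a$ to vanish whenever either elliptic factor degenerates; thus $s_a\in S_{a+8}(\Gamma_1)\otimes S_{14-a}(\Gamma_1)$. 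Since $S_k(\Gamma_1)=\Delta\cdot M_{k-12}(\Gamma_1)$ vanishes for $k<12$, and for every $a\in\{0,\dots,6\}$ at least one of the weights $a+8$ and $14-a$ is less than $12$, each $s_a$ is zero and the restriction vanishes.

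For the last assertion I would use that $S_{6,8}(\Gamma_2)$ vanishes on $\mathcal{A}_{1,1}$ to pass to the normal derivative
$$
\partial\colon S_{6,8}(\Gamma_2)\longrightarrow H^0(\mathcal{A}_{1,1},{\rm Sym}^6(\EE)\otimes L^8\otimes N^{\vee}),
$$
where $N^{\vee}$ is the conormal bundle of $\mathcal{A}_{1,1}$ in $\mathcal{A}_2$. From $T\mathcal{A}_2={\rm Sym}^2(\EE^{\vee})$ one computes $N^{\vee}=L_1\otimes L_2=L|_{\mathcal{A}_{1,1}}$. The kernel of $\partial$ is the space of cusp forms vanishing to order $\geq 2$ along $\overline{\mathcal{A}}_{1,1}$; by the divisor relation (1) these are exactly the products $\chi_{10}\cdot M_{6,-2}(\Gamma_2)$, so $\dim\ker\partial=\dim M_{6,-2}(\Gamma_2)$. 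After the twist by $N^{\vee}$ the decomposition becomes $\bigoplus_{a}L_1^{a+9}\otimes L_2^{15-a}$, and since the normal derivative of a cusp form is again cuspidal on $\overline{\mathcal{A}}_{1,1}$, the image of $\partial$ lands in $\bigoplus_a S_{a+9}(\Gamma_1)\otimes S_{15-a}(\Gamma_1)$. Both weights $a+9$ and $15-a$ are $\geq 12$ only for $a=3$, which contributes the one-dimensional space $S_{12}(\Gamma_1)\otimes S_{12}(\Gamma_1)=\langle\Delta\otimes\Delta\rangle$. Hence $\dim{\rm im}\,\partial\leq 1$ and $\dim S_{6,8}(\Gamma_2)\leq \dim M_{6,-2}(\Gamma_2)+1$, which gives the implication.

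The main obstacle is the geometric input of this last step: identifying $N^{\vee}$ correctly and verifying that the normal derivative of a cusp form vanishes along $\partial\overline{\mathcal{A}}_{1,1}$, which requires a transversality analysis of $D$ and $\overline{\mathcal{A}}_{1,1}$ at the codimension-two corner. The remaining ingredient is the characteristic-$0$ count $\dim S_{6,8}=1$ feeding the semicontinuity bound.
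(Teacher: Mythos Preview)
Your proof is correct and follows essentially the same approach as the paper: semicontinuity for the lower bound, restriction to $\mathcal{A}_1\times\mathcal{A}_1$ and the vanishing of $S_k(\Gamma_1)$ for $k<12$ for the second claim, and a first-order Taylor expansion in the normal direction of $\mathcal{A}_{1,1}$ landing in $S_{12}(\Gamma_1)\otimes S_{12}(\Gamma_1)$ for the third. The paper phrases the last step as taking a linear combination of two independent forms that vanishes to order $\geq 2$ and dividing by $\chi_{10}$, which is exactly your rank--nullity argument for the map $\partial$; your explicit identification $N^{\vee}\cong L|_{\mathcal{A}_{1,1}}$ and your honest flagging of the cuspidality of the normal derivative match what the paper uses implicitly.
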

\begin{proof}
  We know that over ${\CC}$ we have
$\dim S_{6,8}(\Gamma_2)=1$. Hence by semi-continuity of dimensions 
there exists a cusp form of weight $(6,8)$ in characteristic $2$.
Note that the Hodge bundle ${\EE}$ pulled back via
$$
\mathcal{A}_1 \times {\mathcal A}_1 \to \mathcal{A}_{1,1}\hookrightarrow \mathcal{A}_2
$$
can be written as $p_1^*({\EE}_1) \oplus p_2^*({\EE}_1)$ 
with ${\EE}_1$ the Hodge bundle of $\mathcal{A}_1$ and $p_1,p_2$ the two projections.
The pullback of $\chi_{6,8}$ to 
$(\mathcal{A}_1 \times \mathcal{A}_1)\otimes {\FF}_2$ must vanish because it lands in
$$
\bigoplus_{j=0}^6 S_{14-j}(\Gamma_1)\otimes S_{8+j}(\Gamma_1)
$$
and this is $(0)$. If we develop a form $\varphi \in S_{6,8}(\Gamma_2)$ 
vanishing on $\mathcal{A}_{1,1} \otimes {\FF}_2$ 
in the normal direction of the divisor
$\mathcal{A}_{1,1}\otimes {\FF}_2$ we see that the first (`non-constant') 
term in its Taylor expansion lands in 
$$
\bigoplus_{j=0}^6 S_{15-j}(\Gamma_1)\otimes S_{9+j}(\Gamma_1)
$$
and the only non-zero factor here is $S_{12}(\Gamma_1)\otimes S_{12}(\Gamma_1)$
and the image 
 is symmetric under the interchange of factors. But $\dim S_{12}(\Gamma_1)=1$.
Therefore, if $\varphi_1, \varphi_2$ are two linearly independent cusp forms
of weight $(6,8)$ there exists a non-trivial linear combination that
vanishes with order $\geq 2$ along $\mathcal{A}_{1,1} \otimes {\FF}_2$. 
This yields a modular form divisible by $\chi_{10}$, hence a regular 
section of ${\rm Sym}^6({\EE}) \otimes \det({\EE})^{-2}$, not identically zero on
$\mathcal{A}_{1,1}\otimes {\FF}_2$. 
\end{proof}

We thus have at least one non-zero modular form $f \in S_{6,8}(\Gamma_2)$ 
that vanishes with multiplicity $1$ on
$\mathcal{A}_{1,1}\otimes {\FF}_2$. Thus we also find a rational modular form 
$f/\chi_{10}$ with possible 
poles along $\mathcal{A}_{1,1} \otimes {\FF}_2$.

We can analyze the order of the poles of 
the coordinates of $f/\chi_{10}$ near a generic
point of $\mathcal{A}_{1,1}$. We consider $f$
near a generic point of $\mathcal{A}_{1,1}\otimes {\FF}_2$ and write its
Taylor expansion  as
$$
f= (\gamma_0,\ldots,\gamma_6) \quad
\text{\rm with $\gamma_i=\sum_{r\geq 0} \gamma_{i,r} t^r$}
$$
where $t$ is a local normal coordinate.
If for fixed $i$ we have $\gamma_{i,r}=0$ for $r<r_0$ then 
$$
\gamma_{i,r_0} \in QS_{14+r_0-i}(\Gamma_1)\otimes QS_{8+r_0+i}(\Gamma_1)
$$
where $QS$ denotes quasi-modular cusp forms, see \cite[Sec.\ 5]{Zagier}
for the notion of quasi-modular form.
From the dimensions of $S_k(\Gamma_1)$ we see as in the proof of Lemma
\ref{dim68}
that ${\rm ord}_{\mathcal{A}_{1,1}}(\gamma_3)\geq -1$ and we can 
even get the estimate
$$
{\rm ord}_{\mathcal{A}_{1,1}}(\gamma_0,\ldots,\gamma_6) 
\geq (2,1,0,-1,0,1,2)\, ,
$$
but we will not use this.

\end{section}
\begin{section}{Curves of genus $2$ in characteristic $2$}
Let $C$ be a smooth projective curve of genus $2$
over a perfect field $k$ of characteristic $2$. 
Let $K$ be the canonical divisor (class) of $C$. Then by Riemann-Roch
$\dim H^0(C,O(nK))=2n-1$ for $n\geq 2$.
If $\xi_0,\xi_1$ is a basis of $H^0(C,O(K))$, then there is an element $\eta
\in H^0(C,O(3K))$ such that $\xi_0^3,\xi_0^2\xi_1,\xi_0\xi_1^2,\xi_1^3, \eta$
form a basis of $H^0(C,O(3K))$. Then 
$$
\xi_0^6,\xi_0^5\xi_1,\ldots,\xi_1^6, \, \xi_0^3\eta,\xi_0^2\xi_1\eta,\xi_0\xi_1^2\eta,\xi_1^3\eta
$$
are linearly independent in $H^0(C,O(6K))$ and since $\eta^2 \in H^0(C,O(6K))$ there
must be a linear relation
$$
\eta^2+ c_3\eta + c_6 =0
$$ 
with $c_3$ (resp.\ $c_6$) homogeneous of degree $3$
(resp.\ $6$) in $\xi_0,\xi_1$.
Setting $x=\xi_1/\xi_0$ and $y=\eta/\xi_0^3$ we can write the equation as
$$
y^2+a \, y =b \eqno(5)
$$
with $a \in k[x]$ non-zero of degree $\leq 3$ and $b \in k[x]$ of degree $\leq 6$. 
The hyperelliptic involution of the curve $C$ is given by $y \mapsto y+a$.
The fixed points are given by the equation $a=0$ 
(or $c_3=0$ in projective coordinates); 
hence there are $3$, $2$ or $1$ ramification points. This corresponds to the $2$-rank
of ${\rm Jac}(C)$ being equal to $2$, $1$ or $0$.

We point out that a curve of genus $2$ defined by an equation (5)  
comes with a basis ${d}x/a$, $x\, {d}x/a$ of $H^0(C,O(K))$.

The {\sl choices} we made for arriving at equation (5) 
are a basis of $H^0(C,O(K))$ and an element of $\eta$ in $H^0(C,O(3K))$.
Clearly, another choice of $\eta$ is of the form $\epsilon \eta + \theta $ with $\epsilon$ a unit in $k$
and $\theta$ homogeneous of degree $3$ in $\xi_0,\xi_1$, or $y\to uy+v$
for $u\in k^*$ and $v\in k[x]$ of degree $\leq 3$.

The induced action on
the pair $(a,b)$ is by 
$$
(a,b) \mapsto (a/u,(b+v^2+av)/u^2) \quad
\text{\rm with $u\in k^*$, $v\in k[x]$ of degree $\leq 3$}\, .
$$
Another choice of basis of $H^0(C,O(K))$ is given by an element of
${\rm GL}(V)$ with $V=\langle \xi_0,\xi_1\rangle$. The action on $x,y$ is by
$$
x \mapsto (\alpha x +\beta)/(\gamma x + \delta), \quad
y \mapsto y/(\gamma x + \delta)^3 \, .
$$

Let $Y$ be the algebraic stack of triples $(\pi,\alpha,\beta)$ with $\pi: C \to S$ a curve of genus $2$, $\alpha: \mathcal{O}_S^{\oplus 2} {\buildrel \sim \over \longrightarrow} 
\pi_* \omega_{\pi}$ and $\beta \in \pi_*(\omega^{\otimes 3})$ nowhere in the image
of ${\rm Sym}^3(\pi_*(\omega_{\pi}))\to \pi_*(\omega^{\otimes 3}_{\pi})$.
We may view it as the stack of curves of genus $2$ with a framed Hodge bundle and
a section of $\omega_{\pi}^{\otimes 3}$ satisfying the additional condition that it yields
an equation as  (5).
There is an obvious action of ${\rm GL}(2)$ and an action of 
${\rm Sym}^3(\mathcal{O}_S^{\oplus 2})$.

We thus consider the stack
$$
[Y/{\rm GL}(2)\ltimes {\rm Sym}^3(\mathcal{O}_S^{\oplus 2})]\, .
$$
and we can identify it with the moduli space $\mathcal{M}_2\otimes{\FF}_2$
of curves of genus $2$ in characteristic $2$.

\bigskip
We now describe a concrete form of this stack. Let $V$ be a $2$-dimensional
$k$-vector space generated by $x_1,x_2$.
Consider the subspace of  ${\rm Sym}^3(V) \times {\rm Sym}^6(V)$
of pairs $(a,b)$ satisfying the condition that the pair defines a
non-singular curve; in the affine version this amounts to
$a$ and $(a')^2b+(b')^2$ (with $a'$ and $b'$ the derivative) 
having no root in common. 
\smallskip

We  write $V_{j,l}$ for ${\rm Sym}^j(V) \otimes \det(V)^{\otimes l}$.
We let the semi-direct product 
${\rm GL}(V) \ltimes V_{3,-1}$ act on $ V_{3,-1}\times V_{6,-2}$ via twisting
the two actions of ${\rm GL}(V)$ and ${\rm Sym}^3(V)$. Without twisting an
element
$M \in {\rm GL}(V)$ acts by
$$
(a,b) \mapsto (a(\alpha x_1+\beta x_2,\gamma x_1+\delta x_2), b(\alpha x_1+\beta x_2,\gamma x_1+\delta x_2))
$$
and $v \in {\rm Sym}^3(V)$ by
$$
(a,b) \mapsto (a,b+v^2+va) \, .
$$
After twisting  $c\cdot {\rm Id}_V$ acts via $c$ on $V_{3,-1}$
and by $c^2$ on $V_{6,-2}$ and this is compatible with the equation $y^2+ay=b$
if we let $c \cdot {\rm Id}_V$ 
 act on $y$ by $y\mapsto c\, y$. It is also compatible with
$b \mapsto b+v^2+av$ for $v \in V_{3,-1}$.

We thus consider
$$
{\mathcal X}^0 \subset {\mathcal X}=
V_{3,-1}\times V_{6,-2}\, ,
$$
where ${\mathcal X}^0$ is the open substack 
given by the condition that $y^2+ay=b$ defines a 
smooth projective curve
of genus $2$. 

The moduli space ${\mathcal M}_2 \otimes {\FF}_2$ can be identified with the stack quotient
$$
[\mathcal{X}^0/{\rm GL}(V) \ltimes V_{3,-1}]
$$
Note that by our choice of twisting 
the stabilizer of a pair $(a,b)$ contains $({\rm id}_V,a)$ as it should,
since the generic curve has an automorphism group of order $2$.
The pullback of the Hodge bundle ${\EE}$ on $\mathcal{M}_2\otimes{\FF}_2$ is the 
equivariant bundle~$V$.
\end{section}
\begin{section}{Invariants and modular forms} \label{InvariantsandMF}
We write elements $a \in {\rm Sym}^3(V)$ and  $b\in {\rm Sym}^6(V)$ as
$$
a=\sum_{i=0}^3 a_i \, x_1^{3-i}x_2^i, 
\quad b=\sum_{i=0}^6 b_i\,  x_1^{6-i}x_2^i\, .
$$
Following standard usage, by 
an invariant for the action of ${\rm GL}(V) \ltimes {\rm Sym}^3(V)$ we mean 
a polynomial in the coordinates $a_0,\ldots,a_3$ and $b_0,\ldots,b_6$ of the pair $(a,b)$
in ${\rm Sym}^3(V) \times {\rm Sym}^6(V)$
that is invariant under ${\rm SL}(V) \ltimes {\rm Sym}^3(V)$.
We define $\mathcal{K}$ as the ring of ${\rm SL}(V)\ltimes {\rm Sym}^3(V)$-invariants 
under the action on $\mathcal{X}$.

The simplest example of such an invariant is the expression
$$
K_1= a_0a_3+a_1a_2,
$$
the square root of the discriminant of $a$.

Since the pullback to $\mathcal{X}^0$ 
of the Hodge bundle ${\EE}$ under the Torelli map
$\mathcal{M}_2 \to \mathcal{A}_2$ is the equivariant bundle $V$, 
pulling back of scalar-valued modular forms gives invariants. 
This provides us with a homomorphism
$$
\mu: \mathcal{R}_2({\FF}_2) \to \mathcal{K}\, .
$$
Since the image of the Torelli map 
$\mathcal{M}_2  \hookrightarrow \mathcal{A}_2$ is 
the open set that is the complement of $\mathcal{A}_{1,1}$, 
the zero locus of $\chi_{10}$,
we see that an invariant defines a  rational modular form that becomes
regular after multiplication by a power of $\chi_{10}$.
Therefore, we can extend the map $\mu$ to
$$
 \mathcal{R}_2({\FF}_2) {\buildrel \mu \over \longrightarrow} \mathcal{K}
{\buildrel \nu \over \longrightarrow} \mathcal{R}_2({\FF}_2)_{\chi_{10}} 
$$
such that $\nu\cdot \mu={\rm id}$; here the index $\chi_{10}$ indicates 
localization at the multiplicative system generated by $\chi_{10}$.

Since we know the existence of a modular form $\psi_1 \in M_1(\Gamma_2)$
we see that $\mu(\psi_1)$ must equal $K_1$ as $K_1$ is the only invariant
of weighted degree $3$. (Here the weight of $a_i$ (resp.\ $b_j$) is $i$ (resp.\ $j$).)
This implies in particular that $\nu(K_1)$ is a 
regular modular form of weight $1$.

\bigskip

Similarly for covariants. 
By a covariant under the action of ${\rm GL}(V) \ltimes {\rm Sym}^3(V)$ we mean
a polynomial in the coefficients $a_i$ and $b_j$ and $x_1$, $x_2$ which is
an invariant for the action of ${\rm SL}(V)\ltimes {\rm Sym}^3(V)$.

The simplest example of a covariant is given by the polynomial $a$.
It defines an a priori rational modular form of weight $(3,-1)$.

\begin{lemma}
The form $\nu(a)$ is  a regular modular form of weight $(3,-1)$.
\end{lemma}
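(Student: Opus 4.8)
The plan is to exhibit $\nu(a)$ as a section that is regular in codimension one and then invoke the Koecher principle together with normality. Since $a$ is a polynomial on $\mathcal{X}$, and hence $V_{3,-1}$-invariant (the action $v\colon (a,b)\mapsto(a,b+v^2+va)$ leaves $a$ unchanged), it defines a genuine regular section of ${\rm Sym}^3(\EE)\otimes L^{-1}$ over the quotient $[\mathcal{X}^0/{\rm GL}(V)\ltimes V_{3,-1}]=\mathcal{M}_2\otimes\FF_2$, which is the image of the Torelli map, i.e.\ the complement $\mathcal{A}_2\setminus\mathcal{A}_{1,1}$ of the zero locus of $\chi_{10}$. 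Thus $\nu(a)$ is regular away from $\mathcal{A}_{1,1}$, and its only possible pole is along the divisor $\mathcal{A}_{1,1}\otimes\FF_2$; as ${\rm Sym}^3(\EE)\otimes L^{-1}$ is locally free on the smooth stack $\mathcal{A}_2$, it suffices to show that $\nu(a)$ has no pole at a generic point of $\mathcal{A}_{1,1}$.

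To control the behaviour along $\mathcal{A}_{1,1}$ I would argue exactly as in the proof of Lemma~\ref{dim68} and in the discussion following it. Writing $\EE|_{\mathcal{A}_1\times\mathcal{A}_1}=p_1^*\EE_1\oplus p_2^*\EE_1$, the bundle ${\rm Sym}^3(\EE)\otimes L^{-1}$ restricts to $\bigoplus_{i=0}^3 (p_1^*\EE_1)^{\otimes(2-i)}\otimes(p_2^*\EE_1)^{\otimes(i-1)}$, where the $i$-th summand carries the coordinate of $x_1^{3-i}x_2^i$. Let $t$ be a local coordinate normal to $\mathcal{A}_{1,1}$ and expand the $i$-th coordinate $\delta_i$ of $\nu(a)$ as $\delta_i=\sum_{r}\delta_{i,r}\,t^r$. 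Tracking the weight of the conormal bundle of $\mathcal{A}_{1,1}$ in $\mathcal{A}_2$ (which equals $p_1^*\EE_1\otimes p_2^*\EE_1$) as in Lemma~\ref{dim68}, the coefficient $\delta_{i,r}$ lies in $QM_{(2-i)+r}(\Gamma_1)\otimes QM_{(i-1)+r}(\Gamma_1)$, where $QM_\bullet(\Gamma_1)$ denotes quasi-modular forms; for us only the vanishing $QM_k(\Gamma_1)=0$ for $k<0$ will matter.

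The decisive observation is that the two weights $(2-i)+r$ and $(i-1)+r$ add up to $1+2r$, independently of $i$. A pole of order $m\geq 1$ in the coordinate $\delta_i$ would mean $\delta_{i,-m}\neq 0$, forcing both $QM_{(2-i)-m}(\Gamma_1)$ and $QM_{(i-1)-m}(\Gamma_1)$ to be nonzero; since quasi-modular forms of negative weight vanish, this requires $(2-i)-m\geq 0$ and $(i-1)-m\geq 0$, whose sum gives $1-2m\geq 0$, impossible for $m\geq 1$. Hence no coordinate of $\nu(a)$ has a pole along $\mathcal{A}_{1,1}$, so $\nu(a)$ is regular in codimension one and extends to a regular section of ${\rm Sym}^3(\EE)\otimes L^{-1}$, i.e.\ a modular form of weight $(3,-1)$.

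The main obstacle is the bookkeeping in the second paragraph: one must justify rigorously that the Laurent coefficients $\delta_{i,r}$ lie in the asserted tensor products of (quasi-)modular forms, i.e.\ carry out the normal expansion along $\mathcal{A}_{1,1}$ including the contribution of the conormal bundle, precisely as was done for the weight $(6,8)$ form preceding this lemma. Once this framework is in place, the parity of the sum of the two weights closes the argument at once; I note that the same computation for weight $(6,-2)$ gives sum $2$, which is exactly why a rational form of that weight may acquire a simple pole in its middle coordinate, whereas the odd sum $1$ here excludes poles entirely.
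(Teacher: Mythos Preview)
Your argument is correct and takes a genuinely different route from the paper's.

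The paper does not use the Laurent/Taylor expansion to bound the pole of $\nu(a)$. Instead it exploits the fact, established just before the lemma, that $\nu(K_1)=\nu(a_0a_3+a_1a_2)=\psi_1$ is already known to be a \emph{regular} form of weight~$1$ (this comes from the cycle-class existence of the Hasse invariant). Combined with the obvious symmetry ${\rm ord}_{\mathcal{A}_{1,1}}(a_0)={\rm ord}_{\mathcal{A}_{1,1}}(a_3)$ and ${\rm ord}_{\mathcal{A}_{1,1}}(a_1)={\rm ord}_{\mathcal{A}_{1,1}}(a_2)$, and a coordinate change killing one $a_i$, regularity of $a_0a_3+a_1a_2$ forces each $a_i$ to be regular. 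Only afterwards does the paper look at the restriction to $\mathcal{A}_{1,1}$, and then merely to see that $a_0,a_3$ actually vanish there.

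Your approach bypasses the input from $\psi_1$ entirely: you read off directly that the leading Laurent coefficient of the $i$-th coordinate at order $-m$ would live in bi-weight $(2-i-m,\,i-1-m)$, whose sum $1-2m$ is negative for $m\geq 1$, so one factor sits in negative weight and vanishes. This is cleaner and more self-contained; it also explains conceptually why weight $(3,-1)$ behaves better than $(6,-2)$, exactly as you note in your last sentence. One small sharpening: for the \emph{leading} nonzero Laurent coefficient you land in genuine modular forms $M_k(\Gamma_1)$ on the divisor, not merely quasi-modular forms, so you may simply invoke $M_k(\Gamma_1)=0$ for $k<0$ (stated in Section~2) and avoid any worry about what ``quasi-modular of negative weight'' means in characteristic~$2$.
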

\begin{proof}
The pullback of the ${\rm Sym}^3({\EE})$ to 
 $(\mathcal{A}_1 \times \mathcal{A}_1) \otimes {\FF}_2$
decomposes as 
$$
\bigoplus_{j=0}^3\,  p_1^*({\EE}_1)^{3-j}\otimes p_2^*({\EE}_1)^j
$$ 
and the coefficients of $a$ correspond to the factors. In view of the symmetry
we have
${\rm ord}_{\mathcal{A}_{1,1}}(a_0)={\rm ord}_{\mathcal{A}_{1,1}}(a_3)$ and
${\rm ord}_{\mathcal{A}_{1,1}}(a_1)={\rm ord}_{\mathcal{A}_{1,1}}(a_2)$. 
As we saw above the form
$\nu(K_1)=\nu(a_0a_3+a_1a_2)$ is regular. By coordinate changes
 we may assume that $a_0=0$ or $a_1=0$, so it easily follows that 
${\rm ord}_{\mathcal{A}_{1,1}}(a_i)\geq 0$ and the form is regular.
Moreover, since its pullback lands in $\oplus_{j=0}^3 M_{2-j}(\Gamma_1) \otimes
M_{j-1}(\Gamma_1)$ and $M_{2}(\Gamma_1)\otimes M_{-1}(\Gamma_1)=(0)$ it follows that
${\rm ord}_{\mathcal{A}_{1,1}}(a_0) \geq 1$ 
and ${\rm ord}_{\mathcal{A}_{1,1}}(a_3)\geq~1$. 
\end{proof}

\begin{corollary} \label{ordai}
We have ${\rm ord}_{\mathcal{A}_{1,1}}(a_i)=0$ for $i=1,2$ and ${\rm ord}_{\mathcal{A}_{1,1}}(a_i)\geq 1$ for $i=0,3$.
\end{corollary}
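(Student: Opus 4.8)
The bounds ${\rm ord}_{\mathcal{A}_{1,1}}(a_i)\geq 0$ for all $i$, together with ${\rm ord}_{\mathcal{A}_{1,1}}(a_0)\geq 1$ and ${\rm ord}_{\mathcal{A}_{1,1}}(a_3)\geq 1$, are already furnished by the preceding Lemma, so the only new content is the \emph{upper} bound ${\rm ord}_{\mathcal{A}_{1,1}}(a_1)={\rm ord}_{\mathcal{A}_{1,1}}(a_2)=0$, i.e.\ that $a_1$ and $a_2$ do not vanish identically on $\mathcal{A}_{1,1}\otimes{\FF}_2$. The plan is to read this off from the invariant $K_1=a_0a_3+a_1a_2$, whose associated modular form $\nu(K_1)$ is the Hasse invariant $\psi_1$ (recall $\mu(\psi_1)=K_1$). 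The one genuinely non-formal step will be to show that $\psi_1$ does not vanish identically along $\mathcal{A}_{1,1}\otimes{\FF}_2$; everything else is bookkeeping with the weight decomposition of ${\rm Sym}^3({\EE})$ already carried out in the Lemma.

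First I would establish this non-vanishing of $\psi_1$ on $\mathcal{A}_{1,1}\otimes{\FF}_2$. The cleanest route is the exact sequence (2): at weight $k=1$ the kernel term $M_{-9}(\Gamma_2)$ is zero, so the restriction map $M_1(\Gamma_2)\to {\rm Sym}^2(M_1(\Gamma_1))$ is injective and hence sends $\psi_1$ to a non-zero element. Since $M_1(\Gamma_1)=\langle c_1\rangle$, pulling back to $(\mathcal{A}_1\times\mathcal{A}_1)\otimes{\FF}_2$ shows that the restriction of $\psi_1$ is $p_1^*(c_1)\,p_2^*(c_1)\neq 0$. This is consistent with, and can be independently checked by, the multiplicativity of the Hasse invariant under products: a product of elliptic curves is ordinary exactly when both factors are.

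It then remains to combine this with the order estimates from the Lemma. Pulled back to $(\mathcal{A}_1\times\mathcal{A}_1)\otimes{\FF}_2$ and read in the coordinate frame adapted to the splitting of ${\rm Sym}^3({\EE})\otimes\det({\EE})^{-1}$, the identity $K_1=a_0a_3+a_1a_2$ expresses the restriction of $\nu(K_1)$ as the corresponding combination of the coordinate sections. The term $a_0a_3$ has order ${\rm ord}_{\mathcal{A}_{1,1}}(a_0)+{\rm ord}_{\mathcal{A}_{1,1}}(a_3)\geq 2$ along $\mathcal{A}_{1,1}\otimes{\FF}_2$, so it vanishes on $\mathcal{A}_{1,1}\otimes{\FF}_2$ and contributes nothing to the value there. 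Hence the non-zero restriction $\psi_1|_{\mathcal{A}_{1,1}\otimes{\FF}_2}=p_1^*(c_1)p_2^*(c_1)$ must coincide with the restriction of $a_1a_2$, forcing $(a_1a_2)|_{\mathcal{A}_{1,1}\otimes{\FF}_2}\neq 0$. Therefore ${\rm ord}_{\mathcal{A}_{1,1}}(a_1)+{\rm ord}_{\mathcal{A}_{1,1}}(a_2)=0$, and since both summands are $\geq 0$ we conclude ${\rm ord}_{\mathcal{A}_{1,1}}(a_1)={\rm ord}_{\mathcal{A}_{1,1}}(a_2)=0$, as desired.
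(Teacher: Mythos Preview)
Your argument is correct and follows the same line as the paper: the paper's proof is the single sentence ``We note that $\psi_1$ does not vanish on $\mathcal{A}_{1,1}\otimes\FF_2$,'' and you have simply made explicit both why $\psi_1$ is nonzero there (via sequence~(2) at $k=1$, or equivalently by the description of its divisor as $V_1$) and how the identity $K_1=a_0a_3+a_1a_2$ together with ${\rm ord}_{\mathcal{A}_{1,1}}(a_0a_3)\geq 2$ forces ${\rm ord}_{\mathcal{A}_{1,1}}(a_1)={\rm ord}_{\mathcal{A}_{1,1}}(a_2)=0$.
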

\begin{proof}
We note that $\psi_1$ does not vanish on $\mathcal{A}_{1,1}\otimes {\FF}_2$.
\end{proof}
\begin{remark}
The restriction of the modular form 
$\nu(a)$ to $\mathcal{A}_{1,1}\otimes {\FF}_2$ is the form
$c_1\otimes 1 + 1 \otimes c_1 \in M_1(\Gamma_1)\otimes M_0(\Gamma_1)\oplus M_0(\Gamma_1)
\otimes M_1(\Gamma_1)$.
\end{remark}
Another example is the expression
$$
(a_0a_2+a_1^2)x_1^2+(a_0a_3+a_1a_2)x_1x_2+(a_1a_3+a_2^2)x_2^2
$$
which then defines a regular modular form of weight $(2,0)$. In fact,
this lives in the second factor of
the decomposition ${\rm Sym}^2(V_{3,-1})= V_{6,-2}\oplus V_{2,0}$.
Note that the map $V_{2,0}\to V_{0,0}$ given by
$\alpha x_1^2+\beta x_1x_2+\gamma x_2^2 \mapsto \beta$ defines an invariant
in characteristic $2$.
\end{section}
\begin{section}{Modular Forms from Invariants}
Igusa used in \cite{Igusa1960}
invariants of binary sextics to construct modular forms and to
construct a coarse moduli space of curves of genus $2$.
The ${\ZZ}$-algebra of even degree invariants of binary sextics
describes a coarse moduli space 
$$
\mathcal{Y}={\rm Proj}\left({\ZZ}[J_2,J_4,J_6,J_8,J_{10}]/(J_4^2-J_2J_6+4\, J_8)\right)
$$
for curves of genus $2$ over ${\ZZ}$. 

To get characteristic $2$ invariants (or covariants) 
one lifts the curve given by $y^2+ay=b$ 
to the Witt ring, say defined by $(\tilde{a},\tilde{b})$,
 and takes an invariant (or covariant) 
of the binary sextic defined by $\tilde{a}^2+4\tilde{b}$,
 divides these by an appropriate power
of $2$ and reduces these modulo $2$. 
This defines invariants (or covariants). 

If $f=\sum_{i=0}^6 c_i x^{6-i}$ is the (universal) binary sextic then
the invariant
$$
J_2= 2^{-2}(-120 \, c_0c_6+20\, c_1c_5-8\, c_2c_4+3 \, c_3^2)
$$
gives by the substitution 
$$
c_i= 4\, b_i+ \sum_{0\leq r,s\leq 3, r+s=i} a_ra_s \eqno(6)
$$
the invariant
$ K_2=(a_0a_3+a_1a_2)^2$, that is, $K_2=K_1^2$. For the formula for $J_2$ 
and the formulas of the
invariants $J_{2i}$ ($i=2,3,5$) 
used below we refer to \cite[p.\ 139--140]{QL} and \cite[p.\ 204]{QL-MA}.

Similarly, the invariant $J_4$ given by
$$
2^{-7} (2640 \, c_0^2c_6^2 -880 \, c_0c_1c_5c_6+ \ldots)
$$
yields via $I_4=J_2^2-24\, J_4$ an invariant $K_4$ in characteristic $2$ that turns out to be
reducible and divisible by $K_1$. We thus set $K_3=K_4/K_1$ and get an invariant
$$
\begin{aligned}
K_3=(a_0a_3+a_1a_2 ) b_3^2+ 
(a_0^2a_3^2+a_0a_2^3+a_1^3a_3+a_1^2a_2^2 )b_3+&\\ 
(a_0a_3+a_1a_2)a_1^2 b_4+  ( a_0a_3+a_1a_2 ) a_2^2b_2+ &\\
( a_0^2a_1a_3+a_0^2a_2^2+a_0a_1^2a_2+a_1^4) b_5+ &\\
( a_0a_2a_3^2+a_3^2a_1^2+a_1a_2^2a_3+a_2^4 ) b_1+ &\\
( a_0a_3+a_1a_2 ) a_0^2b_6+ 
( a_0a_3+a_1a_2 ) a_3^2b_0\, .  &\\
\end{aligned}
$$
Similarly, $J_6$ yields the invariant $K_3^2$ in characteristic $2$.
The invariant $J_8$ yields the invariant
$$
\begin{aligned}
K_8=  b_3^8+ & (a_0a_3+a_1a_2)^2 b_3^6 + \\
& (a_0a_3+a_1a_2) (a_0^2a_3^2+a_0 a_2^3+ a_1^3a_3+a_1^2a_2^2) b_3^5 +\ldots\, , \\ 
\end{aligned}
$$
and similarly from $J_{10}$ we obtain
$$
\begin{aligned}
K_{10}= b_3^6 (a_0a_3)^4  + b_3^5 (a_0^5a_3^5+a_0^4a_1a_2a_3^4+a_0^4a_2^3a_3^3+a_0^3a_1^3a_3^4) + & \\
b_3^4 \big(a_0^6a_3^6+a_0^5a_1a_2a_3^5+a_0^4a_1^2a_2^2a_3^4+a_0^3a_1^3a_2^3a_3^3+&\\
a_0^6a_3^4b_6+a_0^5a_1a_3^4b_5+a_0^5a_2^2a_3^3b_5+a_0^4a_1^2a_3^4b_4+&\\
a_0^4a_1a_2^3a_3^2b_5+a_0^4a_2^6b_6+a_0^4a_2^5a_3b_5+a_0^4a_2^4a_3^2b_4+&\\
a_0^4a_2^2a_3^4b_2+a_0^4a_2a_3^5b_1+a_0^4a_3^6b_0+a_0^3a_1^2a_3^5b_1+&\\
a_0^2a_1^4a_3^4b_2+a_0^2a_1^3a_2a_3^4b_1+a_0a_1^5a_3^4b_1+a_1^6a_3^4b_0+&\\
a_0^4a_2^4b_5^2+a_1^4a_3^4b_1^2\big) +\ldots \, .  &\\
\end{aligned}
$$

Finally, the invariant $I_{15}$ defines an invariant $K_{15}$, but 
$K_{15}$  can be expressed in terms of the invariants already obtained:
$$ 
K_{15}=K_1^3 K_3^4+K_1^5 K_{10}+K_1^4 K_3K_8+K_3^5.
$$
\begin{lemma}
We have
${\rm ord}_{\mathcal{A}_{1,1}}(b_3)=-1$ and
${\rm ord}_{\mathcal{A}_{1,1}}(b_i)\geq 0$ if $i\neq 3$.
\end{lemma}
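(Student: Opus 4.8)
The plan is to determine the orders of vanishing of the coordinates $b_i$ along $\mathcal{A}_{1,1}\otimes{\FF}_2$ by the same pullback/restriction technique used for the covariant $a$ in the previous lemma. First I would pull back the covariant $b \in V_{6,-2}$ to $(\mathcal{A}_1 \times \mathcal{A}_1)\otimes{\FF}_2$ via the map $\mathcal{A}_1 \times \mathcal{A}_1 \to \mathcal{A}_{1,1} \hookrightarrow \mathcal{A}_2$. Since ${\rm Sym}^6({\EE})$ pulled back decomposes as $\bigoplus_{j=0}^6 p_1^*({\EE}_1)^{6-j}\otimes p_2^*({\EE}_1)^j$ and $b$ has weight $(6,-2)$, the coefficient $b_i$ lands (after clearing the $\chi_{10}^{-1}$ coming from $\nu$, or working directly with the generic Taylor expansion in the normal coordinate $t$) in a tensor product of spaces of quasi-modular forms for $\Gamma_1$ of prescribed weights. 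The symmetry of the configuration forces ${\rm ord}_{\mathcal{A}_{1,1}}(b_i)={\rm ord}_{\mathcal{A}_{1,1}}(b_{6-i})$, so it suffices to treat $b_0,b_1,b_2,b_3$.

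The key computation is then a bookkeeping of weights: writing the $r$-th Taylor coefficient $b_{i,r}$ as lying in a product $QS_{w_1}(\Gamma_1)\otimes QS_{w_2}(\Gamma_1)$ with $w_1+w_2$ fixed by the total weight and $w_1-w_2$ determined by $i$ and $r$, and using the known dimensions of $M_k(\Gamma_1)$ and $S_k(\Gamma_1)$ (in particular $M_k=(0)$ for $k<0$ and the appearance of the cusp form $\Delta$ only from weight $12$), one reads off for which $r$ the relevant factor is forced to vanish. This is exactly the mechanism already invoked for $\gamma_3$ in the $S_{6,8}$ discussion, where the middle coordinate acquires a pole of order $1$. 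For $b_3$, the central coefficient, the weight balance is symmetric and the relevant product of quasi-modular spaces is nonzero precisely one order below the naive bound, giving ${\rm ord}_{\mathcal{A}_{1,1}}(b_3)=-1$; for $i \neq 3$ the asymmetry pushes the first possibly-nonzero factor up to $r \geq 0$, yielding ${\rm ord}_{\mathcal{A}_{1,1}}(b_i)\geq 0$.

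To pin down $b_3$ exactly, I would show separately that ${\rm ord}\geq -1$ (an upper bound on the pole, from the weight count as above) and that the order is not $\geq 0$, i.e. that $b_3$ genuinely has a pole. The latter can be seen either from the explicit invariants already written down — for instance $K_8 = b_3^8 + \ldots$ and $K_{10}$ involves $b_3^6$ with leading coefficients in $a_i$ that are units along $\mathcal{A}_{1,1}$ by Corollary~\ref{ordai} — or by checking that the normal-direction expansion of $b_3$ hits the nonzero factor $S_{12}(\Gamma_1)\otimes S_{12}(\Gamma_1)$, exactly as in Lemma~\ref{dim68}. Consistency with the known regular invariants forces $b_3$ to contribute a genuine pole of order $-1$, since otherwise $K_8$ or $K_{10}$ could not have the order along $\mathcal{A}_{1,1}$ dictated by their expression in terms of the generators.

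The main obstacle I anticipate is the exact determination of the order for the central coefficient $b_3$ rather than a mere inequality: the weight-counting argument naturally produces bounds, and one must combine it with the explicit structure of the invariants $K_8, K_{10}$ (or a direct residue/Taylor-coefficient computation against $S_{12}\otimes S_{12}$) to exclude ${\rm ord}_{\mathcal{A}_{1,1}}(b_3)=0$ and establish equality with $-1$. Verifying that the leading $a_i$-coefficients of the top $b_3$-powers in these invariants are indeed nonvanishing along $\mathcal{A}_{1,1}$, using Corollary~\ref{ordai} that $a_1,a_2$ have order $0$ there, is the crux that turns the inequalities into the stated equality.
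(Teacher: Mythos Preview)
Your Taylor--weight bookkeeping for the inequalities is essentially the paper's mechanism, but two points deserve correction.

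First, you treat $b$ as a covariant that can be pulled back like $a$, but $b$ is \emph{not} invariant under the $V_{3,-1}$ factor of the structure group: $v$ acts by $b\mapsto b+v^2+va$. So $b$ does not directly define a (rational) section of ${\rm Sym}^6({\EE})\otimes\det({\EE})^{-2}$, and the initial bound ${\rm ord}_{\mathcal{A}_{1,1}}(b_i)\geq -1$ is not simply ``clearing $\chi_{10}^{-1}$.'' The paper obtains this bound by passing through formula~(6), $c_i=4\,b_i+\sum_{r+s=i}a_ra_s$: the universal binary sextic $\tilde f$ \emph{is} a genuine covariant, $\chi_{10}\,\nu(\tilde f)\in S_{6,8}(\Gamma_2)$, so the $c_i$ have order $\geq -1$; since the $a_i$ are regular one reads off ${\rm ord}(b_i)\geq -1$. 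From there the $S_{12}(\Gamma_1)\otimes S_{12}(\Gamma_1)$ argument gives $\geq 0$ for $i\neq 3$, as you say.

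Second, for the exact equality ${\rm ord}(b_3)=-1$ the paper's device is simpler and avoids the pitfall in your $K_{10}$ suggestion. One uses $K_3$: if all $b_i$ were regular then $\nu(K_3)\in M_3(\Gamma_2)$, but $\dim M_3(\Gamma_2)=1$ (generated by $\psi_1^3=\nu(K_1^3)$) and $K_3$ is visibly not a scalar multiple of $K_1^3$, a contradiction. Your $K_8$ variant works by the identical logic (the $b_3^8$ term has coefficient $1$), but your $K_{10}$ argument does not: the leading $b_3^6$ coefficient in $K_{10}$ is $(a_0a_3)^4$, and by Corollary~\ref{ordai} the functions $a_0,a_3$ have order $\geq 1$ along $\mathcal{A}_{1,1}$, so this coefficient \emph{vanishes} there rather than being a unit. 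Thus $K_{10}$ gives no immediate obstruction to ${\rm ord}(b_3)\geq 0$, contrary to what you assert in your last paragraph.
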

\begin{proof} 
In characteristic $0$ we have $\dim S_{6,8}(\Gamma_2)=1$. 
The covariant $\tilde{f}$, the universal binary sextic, 
defines a rational modular form $\nu(\tilde{f})$
such that $\chi_{10}\nu(\tilde{f}) \in S_{6,8}(\Gamma_2)$.
A non-zero element $\chi$ of $S_{6,8}(\Gamma_2)$ vanishes on $\mathcal{A}_{1,1}$ and the 
coordinates of $\chi/\chi_{10}$ thus have order $\geq -1$ along $\mathcal{A}_{1,1}$.
Since the $a_i$ are regular near
$\mathcal{A}_{1,1}\otimes {\FF}_2$ we see using the formulas (6) 
that ${\rm ord}_{\mathcal{A}_{1,1}}(b_i) 
\geq -1$.
By an argument as in Lemma \ref{dim68} if the vanishing order
equals $-1$, the only non-zero term in the first (`non-constant') term
of the Taylor development sits in the middle coordinate and lands in
$S_{12}(\Gamma_1)\otimes S_{12}(\Gamma_1)$ and thus we see 
that ${\rm ord}_{\mathcal{A}_{1,1}}(b_i)\geq 0$ for
$i\neq 3$. 
The invariant $K_3$ defines a rational modular form $\nu(K_3)$ 
of weight $3$. Since it is not a multiple of $\psi_1^3$ and $\dim M_{3}(\Gamma_2)=1$
it cannot be regular.
Therefore ${\rm ord}(b_3)=-1$. 
\end{proof}

Using this lemma we obtain modular forms 
$\psi_1=\nu(K_1)\in M_1(\Gamma_2)$
and
$$
\chi_{13}=\nu(K_3) \chi_{10} \in S_{13}(\Gamma_2)
$$
as the form of $K_3$ shows that $\nu(K_3)$ has order $-2$ along 
$\mathcal{A}_{1,1}$ since $K_1$ does not vanish on $\mathcal{A}_{1,1}$.
We see that $\chi_{13}$ is not divisible by $\psi_1$, hence 
$\dim M_{12}(\Gamma_2)
< \dim M_{13}(\Gamma_2).$
In view of the  estimates on the dimensions of $M_k(\Gamma_2)$ 
implied by (2)  we conclude
that 
$$
\dim M_{12}(\Gamma_2)=3 \quad \text{\rm and} \quad  \dim M_{13}(\Gamma_2)=4 \, .
\eqno(7)
$$

By inspecting the expression of the invariant 
$K_{10}$ in terms of the $a_i$ and $b_j$ 
we see that $\nu(K_{10})$ is a modular form of weight $10$ 
that vanishes
with multiplicity $2$ along $\mathcal{A}_{1,1}$. Therefore, by the cycle
class of $\mathcal{A}_{1,1}$ 
given in (1) it is a multiple of $\chi_{10}$ and we normalize $\chi_{10}$  by setting
$$
\chi_{10}=\nu(K_{10})\, .
$$

Consider $K_{12}=K_8 K_1^4+K_3^4+K_1^3K_3^3$. It starts as follows
$$
(a_0^2a_3^2+a_0a_1a_2a_3+a_0a_2^3+a_1^3a_3)^4 \, b_3^4 + \ldots
$$
and one can check that it has order $0$ along $\mathcal{A}_{1,1}$. 
This defines a modular form
$$
\psi_{12}=\nu(K_{12})\, .
$$
\begin{remark}
From the fact that $\chi_{10}$ (resp.\ $\psi_{12}$) 
vanishes with multiplicity
$2$ (resp.\ $0$) along $\mathcal{A}_{1,1}$ we see that the order of $a_0$ and $a_3$ along $\mathcal{A}_{1,1}$ is $1$.
\end{remark}
Similarly, $\nu(K_8)$ has order $-8$ along $\mathcal{A}_{1,1}$,
hence we find
$$
\chi_{48}=\nu(K_8) \chi_{10}^4 \in S_{48}(\Gamma_2)\, .
$$
In this way we obtain modular forms $\psi_1,\chi_{10},\psi_{12}, \chi_{13}, \chi_{48}$.

The relation $K_{12}=K_8K_1^4+K_3^4+K_1^3K_3^3$ implies  that
$$
\chi_{10}^4 \psi_{12}= \chi_{48} \psi_1^4 + \chi_{13}^4+ \chi_{13}^3 \psi_1^3
\chi_{10}\, .
\eqno(8)
$$

\bigskip

Inside the ring $\mathcal{R}_2({\FF}_2)$ we thus found generators
$$
\psi_1,\chi_{10},\psi_{12}, \chi_{13}, \chi_{48}
$$
with a relation (8) of weight $52$.
These generate a subring $R=\oplus_k R_k$ of $\mathcal{R}_2({\FF}_2)$ 
with generating function for the
dimensions
$$
G=
\frac{1-t^{52}}{(1-t)(1-t^{10})(1-t^{12})(1-t^{13})(1-t^{48})}
$$
and $\dim R_k= k^3/1080+O(k^2)$.
To see that $R=\mathcal{R}_2({\FF}_2)$ one can use  the degree
of ${\rm Proj}(R)$ 
$$
2\, \deg(\lambda_1^3)=\frac{1}{1440}= 
\frac{52}{1\cdot 10\cdot 12 \cdot 13\cdot 48}  \, ,
$$
(see \cite{vdG})
or argue with the dimensions as follows.
Let $d(k)=\dim M_{k}(\Gamma_2)$ and $r(k)=\dim R_k$. 

\begin{proposition} We have $d(k)=r(k)$ for $k\geq 0$.
\end{proposition}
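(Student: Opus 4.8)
Since $R\subseteq\mathcal R_2(\FF_2)$ we have $r(k)\le d(k)$ for all $k$, so the entire content is the reverse inequality, and I would obtain it by comparing the two rings modulo $\chi_{10}$. Both are integral domains: $\mathcal R_2(\FF_2)$ because $\tilde{\mathcal A}_2\otimes\FF_2$ is integral, and $R$ because the relation (8) is irreducible, being linear in $\chi_{48}$ with leading coefficient $\psi_1^4$ coprime to the remaining terms. Hence $\chi_{10}$ is a non-zero-divisor in each. Writing $\operatorname{res}_k\colon M_k(\Gamma_2)\to\operatorname{Sym}^2 M_k(\Gamma_1)$ for the restriction map of the exact sequence (2), whose kernel is $\chi_{10}M_{k-10}(\Gamma_2)$, the sequences for $\mathcal R_2(\FF_2)$ and for $R$ give
\[
d(k)=d(k-10)+\dim\operatorname{im}(\operatorname{res}_k),\qquad r(k)=r(k-10)+\dim(R/\chi_{10}R)_k .
\]
Everything therefore reduces to the claim $\operatorname{im}(\operatorname{res}_k)=\operatorname{res}_k(R_k)$ for all $k$: granting it, each $f\in M_k(\Gamma_2)$ satisfies $\operatorname{res}_k(f)=\operatorname{res}_k(g)$ for some $g\in R_k$, so $f-g\in\chi_{10}M_{k-10}(\Gamma_2)$, and iterating $M_k(\Gamma_2)=R_k+\chi_{10}M_{k-10}(\Gamma_2)$ (with $M_{k-10n}(\Gamma_2)=0$ once $10n>k$) forces $M_k(\Gamma_2)=R_k$, i.e.\ $d(k)=r(k)$.

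To make the target side explicit I would compute the restrictions of the generators to $\mathcal A_{1,1}=\operatorname{Sym}^2\mathcal A_1$. Using $\mathcal R_1(\FF_2)=\FF_2[c_1,\Delta]$ one has $\operatorname{res}(\psi_1)=c_1\otimes c_1$ and $\operatorname{res}(\chi_{10})=0$, and from the explicit invariants $K_{12},K_3,K_8$ together with the orders of the $a_i$ and $b_j$ along $\mathcal A_{1,1}$ found in Section \ref{InvariantsandMF} I would read off $\operatorname{res}(\psi_{12})$, $\operatorname{res}(\chi_{13})$ and $\operatorname{res}(\chi_{48})$. One then checks that the only relation these four elements satisfy is the reduction of (8) modulo $\chi_{10}$, namely $\operatorname{res}(\chi_{13})^4=\operatorname{res}(\psi_1)^4\operatorname{res}(\chi_{48})$, so that $R/\chi_{10}R$ maps isomorphically onto $\operatorname{res}(R)$ and $\dim\operatorname{res}_k(R_k)=r(k)-r(k-10)$.

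The heart of the matter is then the opposite inclusion $\operatorname{im}(\operatorname{res}_k)\subseteq\operatorname{res}_k(R_k)$, equivalently the bound $\dim\operatorname{im}(\operatorname{res}_k)\le\dim\operatorname{res}_k(R_k)$; since $\operatorname{res}(R)\subseteq\operatorname{im}(\operatorname{res})$ is automatic, this forces equality. Here $\operatorname{im}(\operatorname{res})\cong\mathcal R_2(\FF_2)/\chi_{10}\mathcal R_2(\FF_2)$ is a graded subring of $T=\bigoplus_k\operatorname{Sym}^2 M_k(\Gamma_1)$, the homogeneous coordinate ring of $\overline{\mathcal A}_{1,1}=\operatorname{Sym}^2\mathcal A_1^{\ast}$, and the claim is that this image ring is generated by the four restricted forms above. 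The natural tool is the normal-direction expansion of a weight-$k$ form along $\mathcal A_{1,1}$ exactly as in the proof of Lemma \ref{dim68}, which, through the dimensions of the spaces $S_m(\Gamma_1)$, constrains which symmetric tensors occur as restrictions. This upper bound on the image is the step I expect to be the main obstacle: the containment $\operatorname{res}(R)\subseteq\operatorname{im}(\operatorname{res})$ costs nothing, and the whole difficulty lies in ruling out a modular form whose restriction escapes the subring generated by $\operatorname{res}(\psi_1),\operatorname{res}(\psi_{12}),\operatorname{res}(\chi_{13}),\operatorname{res}(\chi_{48})$.

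Finally, I would keep in reserve the geometric route the authors mention, which sidesteps the image computation altogether. A Jacobian calculation shows that the hypersurface $\operatorname{Spec}(R)$ is singular only along $\{\psi_1=\chi_{10}=\chi_{13}=0\}$, a locus of codimension $2$, so $R$ is regular in codimension $1$ and, being a hypersurface, normal. Once one checks that the five generators have no common zero on $\mathcal A_2^{\ast}$, the ring $\mathcal R_2(\FF_2)$ is module-finite over $R$, and the degree identity $2\deg(\lambda_1^3)=52/(1\cdot10\cdot12\cdot13\cdot48)$ of \cite{vdG} shows that the resulting finite morphism $\mathcal A_2^{\ast}\to\operatorname{Proj}(R)$ has degree $1$, hence is birational. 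Normality of $R$ then gives $\mathcal R_2(\FF_2)=R$ and therefore $d(k)=r(k)$, providing both an independent confirmation of the dimension count and a complete alternative proof.
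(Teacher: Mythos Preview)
Your proposal is not complete: you correctly identify the ``main obstacle'' as showing that no restriction $\operatorname{res}_k(f)$ escapes the subring generated by the restricted generators, and then you do not resolve it. The normal-direction expansion you invoke constrains individual Taylor coefficients but does not by itself bound the image of $\operatorname{res}$; as stated, this step is a hope, not an argument. Likewise your computation of $\operatorname{res}(\psi_{12}),\operatorname{res}(\chi_{13}),\operatorname{res}(\chi_{48})$ and of the relations among them is announced but not performed.

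The paper's proof sidesteps this obstacle entirely, and the trick is worth absorbing. Rather than computing $\operatorname{im}(\operatorname{res}_k)$, the paper uses the full target $\operatorname{Sym}^2 M_k(\Gamma_1)$, of dimension $c(k)(c(k)+1)/2$ with $c(k)=\lfloor k/12\rfloor+1$, as a crude upper bound for $d(k)-d(k-10)$. A single generating-function identity,
\[
G - t^{10}G - \frac{1}{(1-t)(1-t^{12})^2} \;=\; -t^{12}\,\frac{t^{26}+t^{25}+t^{24}+t^{13}+t^{12}+1}{(1-t^{12})(1-t^{48})},
\]
shows that $r(k)-r(k-10)=c(k)(c(k)+1)/2$ whenever $k\not\equiv 0,1,2\pmod{12}$; for those $k$ the crude bound already matches, and induction gives $d(k)=r(k)$. (In particular this \emph{proves} the surjectivity $\operatorname{res}_k(R_k)=\operatorname{Sym}^2 M_k(\Gamma_1)$ for such $k$, exactly what you wanted but could not see directly.) The residues $0,1,2$ are then handled by a separate lemma: one checks, using the algebraic independence of $K_1,K_3,K_8,K_{10}$, that the monomials $\chi_{10}^a\psi_{12}^b\chi_{13}^c\chi_{48}^d$ with $c\le 3$ of weight $k+1$ are linearly independent from $\psi_1 M_k(\Gamma_2)$, so $d(k+1)-d(k)\ge r(k+1)-r(k)$, and this bridges the three bad residues.

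Your reserve ``geometric route'' via normality of $R$ and the degree identity $2\deg(\lambda_1^3)=52/(1\cdot 10\cdot 12\cdot 13\cdot 48)$ is precisely the alternative the paper gestures at before opting for the dimension argument; it is a legitimate approach but you have not carried it out either (the base-point-freeness check and the normality argument are both left as assertions).
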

\begin{proof}
We have $d(k)\geq r(k)$
and we observed equality for $k=0,\ldots,13$ using the upper bound provided
by (2), see (3) and (7). 
We assume by induction that we have $d(k)=r(k)$ for $k \leq m$.
Then by (2) we get an upper bound $d(k) \leq r(k-10) + c(k)(c(k)+1)/2$ 
for $k \leq m+10$ with $c(k)=\dim M_{k}(\Gamma_1)= \lfloor k/12\rfloor+1$.
But we have $r(k)-r(k-10)=c(k)(c(k)+1)/2$ for 
$k\not\equiv 0,\, 1, \, 2 \, (\bmod \, 12)$. Indeed, 
$$
G-t^{10}G- \frac{1}{(1-t)(1-t^{12})^2} = - t^{12} \, 
\frac{t^{26}+t^{25}+t^{24}+t^{13}+t^{12}+1}{t^{60}-t^{48}-t^{12}+1} \, .
$$
Hence $d(k)=r(k)$ for $k \leq m+10$ and $k\not\equiv 0,\, 1, \, 2 \, (\bmod \, 12)$.
But using $d(k+1)-d(k)\geq r(k+1)-r(k)$, to be proved in the next lemma, we
get $d(k)=r(k)$ for all $k \leq m+10$. 
Induction finishes the proof.
\end{proof}

\begin{lemma} We have $d(k+1)-d(k)\geq r(k+1)-r(k)$ for $k\geq 0$.
\end{lemma}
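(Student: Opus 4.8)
The plan is to reduce the inequality to a single injectivity statement and then verify it by restricting to the Hasse divisor. First I would use that both $\mathcal{R}_2({\FF}_2)$ and its subring $R$ are integral domains (because $\tilde{\mathcal{A}}_2\otimes{\FF}_2$ is integral), so that multiplication by the degree-one element $\psi_1$ is injective on every graded piece. The exact sequence
$$
0\to M_k(\Gamma_2)\stackrel{\psi_1}{\longrightarrow} M_{k+1}(\Gamma_2)\to (\mathcal{R}_2({\FF}_2)/\psi_1\mathcal{R}_2({\FF}_2))_{k+1}\to 0
$$
and its analogue for $R$ give $d(k+1)-d(k)=\dim(\mathcal{R}_2({\FF}_2)/\psi_1\mathcal{R}_2({\FF}_2))_{k+1}$ and $r(k+1)-r(k)=\dim(R/\psi_1R)_{k+1}$. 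Hence the lemma is \emph{equivalent} to the injectivity, in each degree, of the natural graded map $R/\psi_1R\to \mathcal{R}_2({\FF}_2)/\psi_1\mathcal{R}_2({\FF}_2)$.

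Second, I would identify both sides. Reducing the relation (8) modulo $\psi_1$ kills the two $\psi_1$-terms and leaves $\chi_{13}^4+\chi_{10}^4\psi_{12}$, so
$$
R/\psi_1R\cong {\FF}_2[\chi_{10},\psi_{12},\chi_{13},\chi_{48}]/(\chi_{13}^4+\chi_{10}^4\psi_{12}),
$$
which is a three-dimensional graded domain, the cutting polynomial being irreducible since $\chi_{10}^4$ and $\chi_{13}^4$ are coprime. On the other side, $\psi_1$ has the reduced, irreducible divisor $V_1$ as its zero locus, so a section of $L^{k+1}$ lies in $\psi_1M_k(\Gamma_2)$ exactly when it vanishes on $V_1$; thus $\mathcal{R}_2({\FF}_2)/\psi_1\mathcal{R}_2({\FF}_2)$ embeds into $\bigoplus_k H^0(V_1,L^k|_{V_1})$. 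By the dictionary of Section~\ref{InvariantsandMF}, $V_1$ is the $2$-rank $\leq 1$ locus $\{K_1=0\}$, i.e. the locus where the binary cubic $a$ has a repeated root.

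It then remains to show that the composite $R/\psi_1R\to\bigoplus_k H^0(V_1,L^k|_{V_1})$ is injective; this yields $r(k+1)-r(k)=\dim(R/\psi_1R)_{k+1}\leq d(k+1)-d(k)$, as desired. Both $R/\psi_1R$ and $\bigoplus_k H^0(V_1,L^k|_{V_1})$ have Krull dimension $3$ (the latter because $\dim V_1=2$), and $R/\psi_1R$ is a domain, so it suffices to show the image is again $3$-dimensional; then its kernel is a prime of coheight $0$ in a $3$-dimensional domain, hence $(0)$. For this I would show that the three restrictions $\chi_{10}|_{V_1},\chi_{13}|_{V_1},\chi_{48}|_{V_1}$ are algebraically independent, by substituting $a_0a_3=a_1a_2$ into the explicit invariants $K_{10}$, $K_1K_3$ and $K_1^4K_8$ and specializing the remaining coefficients $a_i,b_j$ to a convenient point of $\{K_1=0\}$, or equivalently by a Jacobian rank computation.

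I expect this last verification to be the main obstacle, and it cannot be bypassed by soft arguments: already ${\FF}_2[x,xy]\subset{\FF}_2[x,y]$ with the prime element $x$ (where $xy$ is divisible by $x$ in the larger ring but not in the smaller one) shows that $R/\psi_1R$ being a domain does \emph{not} force $R/\psi_1R\to\mathcal{R}_2({\FF}_2)/\psi_1\mathcal{R}_2({\FF}_2)$ to be injective. One must genuinely check that no nonzero normal-form element of $R/\psi_1R$ becomes divisible by $\psi_1$, equivalently that the four generators acquire no new relation upon restriction to the Hasse divisor; the invariant-theoretic formulas restricted to $\{K_1=0\}$ are exactly what provides this.
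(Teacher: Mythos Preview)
Your approach is correct and is essentially the same strategy as the paper's: both reduce the inequality to showing that no nonzero element of a chosen complement $N_{k+1}$ to $\psi_1 R_k$ in $R_{k+1}$ becomes divisible by $\psi_1$ inside the larger ring $\mathcal{R}_2(\FF_2)$, and both verify this by passing to invariants. The difference is only in the packaging of that final step.

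The paper avoids the detour through $V_1$ and Krull dimensions. It uses directly that the ring map $\mu:\mathcal{R}_2(\FF_2)\to\mathcal{K}\subset\FF_2[a_i,b_j]$ is injective: if $P\in N_{k+1}$ satisfies $P=\psi_1 f$, then $Q:=\mu(P)$ is divisible by $K_1$. Now $Q$ is a polynomial in $\mu(\chi_{10})=K_{10}$, $\mu(\psi_{12})=K_8K_1^4+K_3^4+K_1^3K_3^3$, $\mu(\chi_{13})=K_3K_{10}$, $\mu(\chi_{48})=K_8K_{10}^4$ with the exponent of $\chi_{13}$ at most $3$. Reducing modulo $K_1$ and using that $K_1,K_3,K_8,K_{10}$ are algebraically independent, the monomials $K_{10}^a(K_3^4)^b(K_3K_{10})^c(K_8K_{10}^4)^d$ with $c\le 3$ map to distinct monomials $K_3^{4b+c}K_8^dK_{10}^{a+c+4d}$ (the pair $(b,c)$ is recovered from $4b+c$), hence are linearly independent; so $Q\equiv 0\pmod{K_1}$ forces $P=0$. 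This is exactly the algebraic-independence check you isolate, just done in $\FF_2[K_1,K_3,K_8,K_{10}]$ rather than on the hypersurface $\{K_1=0\}$, and it replaces your Jacobian computation by a bookkeeping of exponents.

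One slip to fix: the invariants underlying $\chi_{13}$ and $\chi_{48}$ are $K_3K_{10}$ and $K_8K_{10}^4$, not $K_1K_3$ and $K_1^4K_8$ as you wrote (recall $\chi_{13}=\nu(K_3)\chi_{10}$ and $\chi_{48}=\nu(K_8)\chi_{10}^4$). With that correction your restriction-to-$V_1$ verification goes through and is equivalent to the paper's.
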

\begin{proof} Because of (8) 
we have $R_{k+1}=\psi_1 R_k \oplus N_{k+1}$ with
$N_{k+1}$ the subspace with basis
the forms $\chi_{10}^a\psi_{12}^b\chi_{13}^c\chi_{48}^d$ with
$10\, a+12\, b+13\, c+48\, d=k+1$ with $a,b,c,d \geq 0$ and $c\leq 3$.
We thus have $r(k+1)-r(k)=\dim N_{k+1}$.
We claim that $N_{k+1} \cap \psi_1 M_k(\Gamma_2)=(0)$, and this implies
$d(k+1)-d(k)\geq \dim N_{k+1}$.
Suppose we have an $f\in M_{k}(\Gamma_2)$ with $f\not\in R_k$ and
$\psi_1f \in N_{k+1}$.
We can write $\psi_1f= P$ with $P$ a polynomial in
$\chi_{10},\psi_{12},\chi_{13},\chi_{48}$,
 where no power of $\chi_{13}$ is $>3$.
Then $P=\nu(Q)$ with $Q$ a polynomial in
$$
K_{10},\,  K_8K_1^4+K_3^4+K_3^3K_1^3, \,  K_{3}K_{10}, \, K_8 K_{10}^4
$$
that is divisible by $K_1$ by assumption.
In order that a non-zero $Q$ is divisible by $K_1$ we need at least one monomial
of $Q$  
involving a  power $\geq 4$ 
of $K_3K_{10}$ since $K_1, K_3, K_8$ and $K_{10}$ are algebraically independent.
But this is excluded.
\end{proof}

\begin{remark}
In characteristic different from $2$ 
the invariant $E$ of degree $15$ gives rise to a generator,
a modular form $\chi_{35}$ of weight $35$. But in our case we get
$$
K_{15}=K_3K_{12}+K_{10}K_1^5
$$
and $\nu(K_{15})$ has order $-2$ along $\mathcal{A}_{1,1}\otimes {\FF}_2$,
so if we set $\chi_{25}=\chi_{10}\, \nu(K_{15}) \in S_{25}(\Gamma_2)$ 
we have
$$
\chi_{25}= \chi_{13}\psi_{12}+ \chi_{10}^2\psi_1^5 
$$ 
and do not get a new generator.
\end{remark}

\begin{remark}
The zero locus of $\psi_1$ is the closure of the locus $V_1$ of abelian surfaces of
$2$-rank $\leq 1$. The zero locus of the ideal
$(\psi_1,\chi_{13})$ consists of two components:
the locus $V_0$ of $2$-rank $0$, that is, the supersingular locus,
and the closure of the intersection of $\mathcal{A}_{1,1}\cap V_1$, both with
multiplicity $2$. This fits the cycles classes:
$$
13 \lambda_1^2 = 3 \lambda_1^2 + 10 \lambda_1^2\, .
$$
The class of $V_0$ is $3 \lambda_2= (3/2) \lambda_1^2$, with $\lambda_2$ the second Chern class of ${\EE}$, see \cite[Thm.\ 2.4]{vdG}, 
\cite[Thm.\ 12.4]{E-vdG}.
\end{remark}
\end{section}

\end{document}